\documentclass[a4paper,10pt,leqno]{article}
\usepackage[applemac]{inputenc}
\usepackage[T1]{fontenc}
\usepackage[english,francais]{babel}

\usepackage{geometry}
\usepackage{amsmath}
\usepackage{amssymb,amsmath,amsthm,amscd}
\usepackage{mathrsfs}

\geometry{a4paper}

\usepackage{titlesec}
\titleformat{\section}[hang]{\normalfont\scshape}{\thesection.}{1em}{\centering}

\usepackage{enumerate}

\theoremstyle{plain} 
\newtheorem{theo}{Théorème}[section]

\newtheorem{defi}{Définition}
\newtheorem{rema}{Remarque}

\newtheorem{prop}{Proposition}[section]

\newtheorem{hypo}{Hypothèse}

\numberwithin{equation}{section}

\title{UNE CONDITION SUFFISANTE POUR QUE LE PROBLÈME DE CAUCHY SOIT BIEN POSÉ}
\author{BERNARD LASCAR AND RICHARD LASCAR}
\date{}
\newcommand{\R}{\mathbb{R}}
\newcommand{\C}{\mathbb{C}}

\newcommand{\N}{\mathbb{N}}

\newcommand{\beeq}{\begin{equation}}
\newcommand{\eneq}{\end{equation}}
  
\begin{document}
\maketitle
\begin{center}
\it Dédié à la mémoire du Professeur Louis Boutet de Monvel
\end{center}
{\abstract On prouve ici une estimation d'énergie pour le problème de Cauchy pour des opérateurs hyperboliques à caractéristiques au plus doubles qui contient à la fois les cas non effectivement hyperboliques voir L. Hörmander [3] et les cas effectivement hyperboliques, voir R. Melrose [8].

We prove here an energy estimate for the Cauchy problem for hyperbolic equations with double characteristic, which contains both effectively and non effectively points, see L. Hörmander [3] and R. Melrose [8], in a unique framework.

AMS Classification : Degenerate hyperbolic Equation 35L80.}
\section{Énoncé du Résultat}
Soit $P(t,x,D_t,D_x)$ un opérateur différentiel de degré $m$ sur $\R^{n+1}$ à coefficients $C^\infty$. On suppose que $p_m(t,x,\xi,\tau)$ est un polynôme hyperbolique, c'est à dire que l'équation $\tau\in\C\to p(x,\xi,\tau)$ a $m$ racines réelles pour tout $(t,x,\xi)$ réels. Cette conditions est bien sûr nécessaire. Ivrii-Petkov ont trouvé les conditions nécessaires pour que l'on puisse résoudre le problème de Cauchy [5]. L. Hörmander [2] a traité les cas non effectivement hyperboliques, R. Melrose [8] a résolu le cas effectivement hyperbolique. Il reste donc seulement à étudier des opérateurs près d'un point de transition entre ces deux cas. Voir les traités de référence [3] et [1].

On ne considère ici que des opérateurs à caractéristiques de multiplicité double et par des factorisations évidences, on se ramène au cas où $m=2$ et où $P(t,x,D_t,D_x)$ est un opérateur pseudo-différentiel de la forme :
\begin{align}\label{eq:1.1}
P &=-D_t^2+ A(t,x,D_x), A(t)\in C^{\infty}(\R,\Psi^2(\R^n)) ;\\
A'(t) & = \frac{1}2(A(t)+A^*(t))\in\Psi^2(\R^n) ;\notag\\
A''(t)&=\frac{1}{2i}(A(t)-A(t)^*)\in\Psi^1(\R^n),\notag\\
& \text{le symbole principal }a(t,x,\xi) \text{ de } A' \text{ est } \geq 0\notag
\end{align}
\begin{hypo}\label{hypo:1} On suppose qu'au voisinage de tout point $(0,X_0)\in\{0\}\times T^*\R^n$, il existe $C>0$ et $k\in\N$, $k\geq 1$ tels que :
\begin{enumerate}[i)]
\item 
\beeq\label{eq:1.2} 
a(t,X)=t^{2k}\alpha(t,X)+\beta(t,X),
\eneq
avec $\alpha(t,X)\geq 0$, $\beta(t,X)\geq 0$ et $|\partial_t\alpha(t,X)|\leq C\alpha(t,X)$, $|\partial_t\beta(t,X)|\leq C\beta(t,X)$.
\item La partie imaginaire du symbole sous-principal vérifie :
\begin{multline}\label{eq:1.3}
|a''_1|\leq C\Bigl(\frac{|\partial_t a|}{a^{1/2}}+a^{1/2}\Bigr)(t,X)=C(\mu+a^{1/2})(t,X),\\
\mu \text{ est la valeur propre réelle positive ou nulle de }F_p.
\end{multline}
\item Si $k\geq 2$ ou si $\alpha(0,X_0)=0$, c'est à dire si $(0,X_0)$ est non effectivement hyperbolique on suppose que :
\begin{enumerate}[a)]
\item $\Sigma_t=\beta_{t}^{-1}(0)\subseteq T^* \R^n$ est une variété $C^{\infty}$ de rang symplectique constant, sur laquelle $\beta_t$ s'annule exactement à l'ordre $2$, uniformément pour $t$ voisin de $0$.
\item 
\beeq\label{eq:1.4}
 a'_1(t,X)+\frac{1}{2}\text{tr}^+ F_a(t,X) >0 \text{ pour }(0,X)\in\Sigma, \text{ voisin de }(0,X_0).
\eneq
\end{enumerate}
\item Si $\alpha(0,X_0)>0$ et $k=1$ alors $(0,X_0)$ est effectivement hyperbolique, on ne suppose pas iii), tandis que (\ref{eq:1.3}) est automatiquement vérifiée.
\end{enumerate}
\end{hypo}
\begin{theo} Sous l'hypothèse (\ref{hypo:1}) le problème de Cauchy est bien posé dans $C^\infty$ sur $t=0$.
\end{theo}
\section{Preuve du Théorème}
On fait une estimation d'énergie pondérée.
\begin{prop} Il existe $C>0$ et $T>0$ petit, $s>0$ ne dépendant que de la taille de la partie imaginaire du symbole sous-principal tels que pour $u_1\in C^{\infty}([0,T]\times \R^n)$
\begin{multline}\label{eq:2.1}
\Lambda^{-1}\int_{0}^\infty \left(\|u_1\|^2 (s)\Bigl\|\Bigl(\frac{D_t}{\Lambda}\Bigr)u_1\Bigr\|(s)\right)^2\theta_0(s)ds\\
\leq C\left(\int_0^\infty \|Pu_1\|^2(t)\theta_0(t)dt+\Bigl(\|u_1(0)\|^2+\Bigl\|\Bigl(\frac{D_t}{\Lambda}\Bigr)u_1(0)\Bigr\|^2\Bigr)\Bigl(\int_0^\infty \theta_0(t)dt\Bigr)\right)\\
\text{pour }u_1\in C_0^{\infty}([0,T[\times \R^n).
\end{multline}
avec
\beeq\label{eq:2.2}
\theta(t,\Lambda)=\exp(s\ln (1+t\Lambda^{\tfrac{1}{k+1}})).
\eneq
\end{prop}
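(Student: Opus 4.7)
The plan is to establish (\ref{eq:2.1}) via a Carleman-type weighted energy estimate adapted to the vanishing profile $a=t^{2k}\alpha+\beta$ in (\ref{eq:1.2}). I introduce the energy
\[
E(t)=\Bigl\|\frac{D_t}{\Lambda}u_1(t)\Bigr\|^2+\Lambda^{-2}\bigl(A'(t)u_1(t),u_1(t)\bigr),
\]
multiply the equation $Pu_1=f$ by $\theta_0(t)\,\overline{\partial_t u_1}$, and integrate over $[0,+\infty[$. The self-adjointness of $A'$ and an integration by parts in $t$ produce, on the one hand, a positive principal term $\tfrac{1}{2}\int_0^\infty\theta_0'(t)E(t)\,dt$ coming from the differentiation of the weight together with boundary terms at $t=0$ that yield the initial-data contribution on the right of (\ref{eq:2.1}), and on the other hand error terms coming from $\partial_t A'$, from the antiself-adjoint part $A''$, and from the source $Pu_1$.

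\textbf{Absorbing $\partial_t A'$ in the Carleman gain.} Differentiating (\ref{eq:1.2}) and using $|\partial_t\alpha|\le C\alpha$ and $|\partial_t\beta|\le C\beta$ one obtains
\[
|\partial_t a(t,X)|\le C\bigl(a(t,X)+t^{2k-1}\alpha(t,X)\bigr).
\]
The excess factor $t^{2k-1}\alpha=(t^{2k}\alpha)/t$ is absorbed into the gain
\[
\frac{\theta_0'(t)}{\theta_0(t)}=\frac{s\Lambda^{1/(k+1)}}{1+t\Lambda^{1/(k+1)}},
\]
which dominates $1/t$ uniformly in the region $t\Lambda^{1/(k+1)}\gtrsim 1$ and equals $s\Lambda^{1/(k+1)}$ when $t\Lambda^{1/(k+1)}\lesssim 1$. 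Combined with the sharp G\aa rding inequality for the non-negative symbol $a$, this gives $|\int\theta_0(\partial_t A'\,u_1,u_1)\,dt|\le C\int\theta_0'(A'u_1,u_1)\,dt$ modulo lower-order contributions.

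\textbf{Controlling $A''$ and the choice of $s$.} The term $\int\theta_0\,\mathrm{Im}(A''u_1,\partial_t u_1)\,dt$ is where the parameter $s$ enters. Hypothesis (\ref{eq:1.3}) reads $|a''_1|\le C(\mu+a^{1/2})$ with $\mu^2\lesssim (\partial_t a)^2/a\lesssim a$; combined with a Fefferman-Phong type inequality for the non-negative principal symbol $a$, this yields a pointwise operator bound of $A''$ by $(A')^{1/2}\Lambda^{1/2}+\Lambda$ modulo operators of order one. Applied with Cauchy-Schwarz it gives
\[
\theta_0\bigl|\mathrm{Im}(A''u_1,\partial_t u_1)\bigr|\le\varepsilon\,\theta_0'(t)E(t)+C_\varepsilon\,\theta_0'(t)\Lambda^{-1}\|u_1\|^2,
\]
absorbable into the principal term provided $s$ is chosen large enough in terms of the constant $C$ of (\ref{eq:1.3}); this explains why $s$ depends only on the size of the imaginary subprincipal symbol.

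\textbf{Closing the estimate and the main obstacle.} At a non-effectively hyperbolic point (case iii), the structural hypotheses on $\Sigma_t$ together with (\ref{eq:1.4}) yield the Ivrii-Petkov-H\"ormander microlocal lower bound
\[
A'(t)+\tfrac{1}{2}\mathrm{tr}^+F_a(t)\ge c\,\Lambda
\]
modulo lower-order errors, which, combined with the gain $\theta_0'$, converts the weighted quadratic form $\int\theta_0'(A'u_1,u_1)/\Lambda^2\,dt$ into the $\|u_1\|^2\|(D_t/\Lambda)u_1\|^2$ factor on the left of (\ref{eq:2.1}). At an effectively hyperbolic point (case iv) with $\alpha(0,X_0)>0$ and $k=1$, the term $t^2\alpha$ is transversally elliptic on the double characteristic set and Melrose's weighted estimate applies without (iii). \emph{The main obstacle} is the uniform patching of these two microlocal arguments across transition points where $\alpha$ or $k$ varies: one must build a pseudodifferential partition of unity on $T^*\R^n$ whose cutoffs and error thresholds scale as powers of $\Lambda^{1/(k+1)}$ and are compatible with $\theta_0$, so that the constants in (\ref{eq:2.1}) remain independent of $\Lambda$.
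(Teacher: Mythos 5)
Your proposal replaces the paper's argument by a classical energy method with the scalar weight $\theta_0(t)$ alone, and this is where the essential gap lies. The paper's multiplier is $m=\theta(t,X)+i\tau n(t,X)$: the first-order component $\tau n$ is not a technical refinement but the engine of the proof, since it produces the term $f_0=-\tfrac12\Lambda^{-1}n\,\partial_t\sigma_1$ of (\ref{eq:2.51}), which is what turns the hypothesis (\ref{eq:1.2}) on $\partial_t a$ into positivity (see (\ref{eq:2.76}) for the effectively hyperbolic case, where the lower bound comes from $|n_0|\,t\,\alpha$, not from any Melrose black box). Without it you have no replacement for the completed square $\delta=f-e^2/4c$, and your absorption arguments do not close. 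Concretely: (1) your claim $\mu^2\lesssim(\partial_t a)^2/a\lesssim a$ is backwards --- the paper's (\ref{eq:2.52}) gives $|\partial_t a|/a^{1/2}\le Ct^{-1}a^{1/2}$, so $\mu$ can be of order $t^{-1}a^{1/2}\gg a^{1/2}$ near $t=0$, and the resulting bound on $|a''_1|^2/a$ is $Ct^{-2}$, which is \emph{not} dominated by the Carleman gain $s\,f_\mu^{-1}\simeq s/t$; this is exactly why the paper must subtract $e^2/4c$ and split into the zones $|t|\ge\mu^{-1}$ and $|t|\le\mu^{-1}$ ((\ref{eq:2.54}), (\ref{eq:2.57})). (2) In the inner zone $|t|\le\Lambda^{-1/(k+1)}$ the bad term $\mu^{-2k}\alpha$ of (\ref{eq:2.56}) is absorbed not by $\theta_0'$ but by the convexity term $\tfrac14\Lambda^{-2}\partial_t^2\theta\simeq s^2f_\mu^{-2}\Lambda^{-2}$ of (\ref{eq:2.7}) and (\ref{eq:2.30}); your first-order integration by parts never produces $\theta_0''$, so nothing controls this region. (3) Your ``microlocal lower bound $A'+\tfrac12\mathrm{tr}^+F_a\ge c\Lambda$'' is not what Melin's inequality gives: the correct normalization, and the one the paper uses, is $\delta^{w_\Lambda}\ge c\Lambda^{-1}$ (Proposition \ref{prop:2.7}) applied to $\beta+\Lambda^{-1}a'_1$ under (\ref{eq:1.4}).

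Finally, the point you flag as ``the main obstacle'' --- uniform patching across transition points --- is precisely what the paper's single framework is designed to avoid: one multiplier, one metric $G_Y$ with the auxiliary parameter $\mu\ge\Lambda^{1/(k+1)}$, and one partition of unity (\ref{eq:2.61}) whose bi-commutators are $\mathcal O(f_\mu^{-2}\Lambda^{-2})$ and hence absorbed by the same convexity term. Declaring the patching an open obstacle means the proposal does not actually prove the proposition; you would need to either import the paper's $\#$-calculus computation (Proposition \ref{prop:2.2} through Proposition \ref{prop:2.7}) or supply an equivalent mechanism producing both the $n\,\partial_t a$ term and the $\partial_t^2\theta_0$ term.
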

On quantifie avec un grand paramètre $\Lambda>0$, on note :
\beeq\label{eq:2.3}
(q^{w_\Lambda})u(x)=\int e^{i\Lambda(x-y)\xi} q\Bigl(\frac{x+y}{2},\xi,\Lambda\Bigr)u(y)\Bigl(\frac{\Lambda}{2\pi}\Bigr)^n dyd\xi.
\eneq
L'opérateur $A$ est classique, sa partie auto-adjointe s'écrit $A'=\sigma_1^{w_\Lambda}$ avec 
\begin{multline}\label{2.4}
\sigma_1(t,X)=a(t,X)+\Lambda^{-1}a_1(t,X,\Lambda)\\
\text{avec } a\geq 0, a\in S(1,\Gamma), a_1\in S(1,\Gamma) \text{ et }\Gamma=|dt|^2+|dX|^2.
\end{multline}
La partie non auto-adjointe de $A$ s'écrit $A''=\sigma_2^{w_\Lambda}$ où :
\beeq\label{2.5}
\sigma_2(t,X)=\Lambda^{-1}b_1(t,X), b_1=b_1^0 +b_2, b_1^0 \text{ vérifie (\ref{eq:1.3}) tandis que } b_2\in S(\Lambda^{-2},\Gamma).
\eneq
Pour prouver une borne inférieure pour $Q=q^{w_\Lambda}$ :
\beeq\label{eq:2.6}
q(t,\tau,X)=-(\tau-i\rho \Lambda^{-1})^2 +\sigma_1(t,X) + i\sigma_2(t,X),
\eneq
on calcul un multiplicateur $M=m^w$, $m=m'+i m''$ où $m'(t,X)=\theta(t,X)$, $m''=\tau n(t,X)$. En effet le calcul donne :
\begin{prop}\label{prop:2.2}
\beeq\label{eq:2.7}
\Re m'\# q' =\tau^2\theta +\tau\rho^2 \Lambda^{-2}+\frac{1}{4} \Lambda^{-2}\partial_t^2\theta +\Re\theta\#\sigma_1
\eneq

\beeq\label{eq:2.8}
\Re m''\# q'' =2\rho\tau^2\Lambda^{-1} n +\tau n\Lambda^{-1} b_1+\frac{1}{2} \Lambda^{-2}\Im n\#\partial_t\sigma_1
\eneq

\beeq\label{eq:2.9}
\Im m'\# q''=-\rho\Lambda^{-2}\partial_t\theta -\Lambda^{-1}\Im \theta\# b_1
\eneq

\beeq\label{eq:2.10}
\Im m''\# q'' =-\tau^2\Lambda^{-1}\partial_t n+\tau\Im\#\sigma_1- \frac{1}{2}\Lambda^{-1}\Re n\#\partial_t \sigma_1.
\eneq
Ce qui additionné donne :
\begin{multline}\label{eq:2.11}
\Re\overline{m}\# q=\tau^2(-\theta+2\rho\Lambda^{-1}n-\Lambda^{-1}\partial_t n)+\tau(\Lambda^{-1}nb_1+\Im n\#\sigma_1) +\\
\left(\theta\rho^2\Lambda^{-2}+\frac{1}{4}\Lambda^{-2}\partial_t^2 \theta +\Re\theta\#\sigma_1 +\frac{1}2\Lambda^{-2}\Im n\#\partial_tb_1-\right.\\
\left. \rho\Lambda^{-2}\partial_t\theta -\Lambda^{-1}\Im\theta\#b_1-\frac{1}{2}\Lambda^{-1}\Re n\#\partial_t\sigma_1\right)
\end{multline}
\end{prop}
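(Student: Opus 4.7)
\noindent\emph{Esquisse de preuve.} Le calcul rel\`eve enti\`erement de la formule de composition du calcul de Weyl \`a grand param\`etre $\Lambda$ d\'efini par (\ref{eq:2.3}). \'Ecrivons (\ref{eq:2.6}) sous la forme $q=q'+iq''$ o\`u
\[ q'=-\tau^2+\rho^2\Lambda^{-2}+\sigma_1,\qquad q''=2\tau\rho\Lambda^{-1}+\sigma_2, \]
et rappelons que $m=m'+im''$ avec $m'=\theta(t,X)$ ind\'ependant de $\tau$ et $m''=\tau\,n(t,X)$ lin\'eaire en $\tau$. La d\'ecomposition $\overline{m}\#q=(m'\#q'+m''\#q'')+i(m'\#q''-m''\#q')$, dont on prend finalement la partie r\'eelle, ram\`ene (\ref{eq:2.11}) \`a la combinaison des quatre quantit\'es scalaires (\ref{eq:2.7})--(\ref{eq:2.10}).

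Le plan est alors d'\'evaluer chacune des quatre compositions gr\^ace au d\'eveloppement de Moyal
\[ a\#b=ab+\frac{1}{2i\Lambda}\{a,b\}-\frac{1}{8\Lambda^2}\{a,b\}_2+O(\Lambda^{-3}), \]
en s\'eparant les directions $(t,\tau)$ et $(X,\xi)$. Puisque $m$ est de degr\'e au plus $1$ en $\tau$ et $q$ de degr\'e $2$, la s\'erie en $(t,\tau)$ se termine \`a l'ordre $\Lambda^{-2}$ et fournit exactement les d\'eriv\'ees temporelles $\partial_t\theta$, $\partial_t^2\theta$, $\partial_t n$, $\partial_t\sigma_1$, $\partial_t b_1$ qui figurent aux membres de droite de (\ref{eq:2.7})--(\ref{eq:2.10}). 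Les produits crois\'es dans les variables $(X,\xi)$ doivent en revanche rester sous forme de $\#$-produits ($\theta\#\sigma_1$, $\theta\#b_1$, $n\#\sigma_1$, $n\#\partial_t\sigma_1$, $n\#\partial_t b_1$), puisque $\theta$, $n$, $\sigma_1$, $b_1$ sont des symboles g\'en\'eraux dans ces variables.

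Les quatre identit\'es s'obtiennent en s\'eparant parties r\'eelle et imaginaire des d\'eveloppements pr\'ec\'edents, en exploitant l'antisym\'etrie du crochet de Poisson et le fait que la correction d'ordre $\Lambda^{-1}$ est purement imaginaire lorsque les deux symboles sont r\'eels. Leur somme est alors une simple r\'eorganisation alg\'ebrique~: on regroupe les termes par degr\'e en $\tau$, ce qui donne en coefficient de $\tau^2$ l'expression $-\theta+2\rho\Lambda^{-1}n-\Lambda^{-1}\partial_t n$, en coefficient de $\tau$ l'expression $\Lambda^{-1}nb_1+\Im n\#\sigma_1$, et dans le reste les corrections d'ordre $\Lambda^{-2}$ accompagn\'ees des $\#$-produits r\'esiduels, conform\'ement \`a (\ref{eq:2.11}).

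L'unique difficult\'e s\'erieuse est la comptabilit\'e des signes et des conjugaisons~: les crochets de Poisson interviennent avec des signes oppos\'es selon que l'on consid\`ere $\Re(m'\# q')$ ou $\Im(m'\# q'')$, et il faut distinguer syst\'ematiquement les parties r\'eelle et imaginaire des sous-principaux $a_1$ et $b_1$ introduits en (\ref{2.4})--(\ref{2.5}), dont les contributions apparaissent explicitement dans (\ref{eq:2.11}). Une fois ces signes correctement tabul\'es, la v\'erification de chaque identit\'e et de leur somme devient purement m\'ecanique.
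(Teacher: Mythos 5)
Votre esquisse suit essentiellement la m\^eme d\'emarche que l'article~: celui-ci se contente lui aussi d'invoquer la formule de composition de Weyl \`a grand param\`etre (\ref{eq:2.17}) et d'observer que le d\'eveloppement est exact dans les variables $(t,\tau)$ (les symboles y \'etant polynomiaux de degr\'e au plus $1$ et $2$), tous les termes non explicites ne faisant intervenir que le calcul en $X$ --- exactement la s\'eparation que vous d\'ecrivez. Votre d\'ecomposition $\overline{m}\#q=(m'\#q'+m''\#q'')+i(m'\#q''-m''\#q')$ et la justification de la terminaison de la s\'erie en $(t,\tau)$ sont correctes et m\^eme un peu plus d\'etaill\'ees que la preuve originale.
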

On posera donc :
\beeq\label{eq:2.12}
c=-\theta+2\rho\Lambda^{-1}n-\Lambda^{-1}\partial_t n
\eneq

\beeq\label{eq:2.13}
e=\Lambda^{-1}nb_1+\Im n\#\sigma_1
\eneq

\begin{multline}\label{eq:2.14}
f=\left(\theta\rho^2\Lambda^{-2}+\frac{1}{4}\Lambda^{-2}\partial_t^2\theta +\Re\theta\#\sigma_1 +\frac{1}{2}\Lambda^{-2}\Im n\#\partial_t b_1-\right.\\
\left.\rho\Lambda^{-2}\partial_t\theta -\Lambda^{-1}\Im \theta\# b_1-\frac{1}2\Lambda^{-1}\Re n\#\partial_t\sigma_1\right).
\end{multline}
Donc (\ref{eq:2.11})  s'écrit :
\beeq\label{2.16}
r=c\tau^2+e\tau +f=c(\tau-\lambda)^2+\delta \text{ avec }\lambda=\frac{e}{2c} \text{ et }\delta=f-\frac{e^2}{4c}.
\eneq
Il faut donc réaliser $c>0$ et $\delta >0$.
\begin{proof}[Preuve de la proposition \ref{prop:2.2}]
Soit $Y$ le point courant des variables $(u,\rho,t,\tau,X)$, où $u$ est la variable duale de $\rho$. On utilise la formule :
\beeq\label{eq:2.17}
(a\# b)(y,\eta)\simeq \sum_{\alpha,\beta}\Bigl(\frac{1}{2 i\Lambda}\Bigr)^{|\alpha|+|\beta|}\frac{1}{\alpha!\beta!}(-1)^{|\beta|} \partial_y^\beta \partial_\eta^\alpha a(y,n)\partial_y^\alpha\partial\eta^\beta b(y,n).
\eneq
Tous les termes non explicites de (\ref{eq:2.11}) ne font intervenir que le calcul en $X$.

Pour minorer dans $L^2(\R_y^{n+2})$, $(\Re\overline m\# q)^{w_\Lambda}$ il faut introduire un calcul de Weyl et appliquer un résultat de bornes inférieurs.

Il va falloir utiliser une fonction de poids $\theta_0(t)$ pour tenir compte de la perte de dérivées due à la partie imaginaire du symbole sous-principal, on le prendra de la forme :
\begin{multline}\label{eq:2.18}
\theta_0(t)=\exp\Bigl(-s\int_0^t f_\mu^{-1}(\sigma)d\sigma\Bigr)\\
\text{avec } f_\mu=\mu^{-1}\langle t\mu\rangle \text{ où }\mu \text{ désigne un nouveau grand paramètre}.
\end{multline}
Il y a donc des conditions sur $\mu$ pour que le calcul symbolique au moins en $\tau,t$ soit admissible. On l'étudie donc ici.

En $X$ il n'y a pas de choix, $a\geq 0$ et $\sigma_1=a+\Lambda^{-1}a_1$, $a$ et $a_1$ sont dans $S(1,\Gamma)$. Soit $\mu\geq 1$ on pose :
\begin{defi}\label{defi:1} 
\beeq\label{2.19}
f_\mu(t)=(|t|+\mu^{-2})^{1/2}, \text{ comme }|t|\leq 1, \mu^{-1}\leq f_\mu(t)\leq 1+\mu^{-1}\leq 2.
\eneq
\end{defi}
Donc si $|t|\leq 1$, $|dt|\leq 2 f_\mu^{-1}|dt|$.
\begin{defi}\label{defi:2}
\beeq\label{eq:2.20}
G_Y=M^{-2}|du|^2+\langle\rho,\tau,\Lambda\rangle^{-2}|d\rho|^2 +f_\mu(t)^{-2}|dt|^2+\langle\rho,\tau,\Lambda\rangle^{-2}|d\tau|^2+|dX|^2.
\eneq
\end{defi}
La métrique duale de $G$ par rapport à $\Lambda^2\sigma$ est :
\begin{align}\label{eq:2.12}
G_Y^{\Lambda^2\sigma} &=\Lambda^2\Bigl[ \langle\rho,\tau,\Lambda\rangle^2|du|^2 +M^2|d\rho|^2 +\langle\rho,\tau,\Lambda\rangle^2|dt|^2+f_\mu(t)^2|d\tau|^2+|dX|^2\Bigr]\geq\\
&\geq \min\Bigl(M\Lambda\langle\rho,\tau,\Lambda\rangle,\Lambda\langle\rho,\tau,\Lambda\rangle f_\mu(t),\Lambda\Bigr)^2G_Y\geq \min(\Lambda^2\mu^{-1},\Lambda)^2 G_Y.\notag
\end{align}
\end{proof}
\begin{prop}\label{prop:2.3}
La métrique $g_{1,Y}=f_\mu(t)^{-2}|dt|^2+\langle\rho,\tau,\Lambda\rangle^{-2}|d\tau|^2$ est lent et $\Lambda^2\sigma$ tempérée si $1\leq \mu\leq \Lambda$. La fonction $h_1(Y)$ du calcul associé vaut :
\beeq\label{eq:2.22}
h_1(Y)=\Lambda^{-1}\langle\rho,\tau,\Lambda\rangle^{-1}\mu(\mu t)^{-1}\leq \mu\Lambda^{-2}.
\eneq
\end{prop}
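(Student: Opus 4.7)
Le plan consiste à vérifier successivement les deux conditions de Hörmander (lenteur, puis $\Lambda^2\sigma$-tempérance) pour une métrique admissible au calcul de Weyl, et enfin à calculer explicitement la fonction de Planck associée.

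Pour la lenteur, la métrique $g_1$ étant diagonale avec coefficients $f_\mu(t)^{-2}$ et $\langle\rho,\tau,\Lambda\rangle^{-2}$ dans les directions $(dt,d\tau)$, il suffit d'établir la variation lente de chacun de ces deux poids. En prenant la forme équivalente $f_\mu=\mu^{-1}\langle\mu t\rangle$ de la définition (\ref{eq:2.18}), on dispose de $|\partial_t f_\mu|\leq 1$, et de même $|\partial_\tau \langle\rho,\tau,\Lambda\rangle|\leq 1$. J'en déduis que si $g_{1,Y'}(Y-Y')\leq c$ avec $c$ universellement petit, alors $|t-t'|\leq c\,f_\mu(t')$ et $|\tau-\tau'|\leq c\langle\rho',\tau',\Lambda\rangle$, ce qui force $f_\mu(t)\sim f_\mu(t')$ et $\langle\rho,\tau,\Lambda\rangle\sim\langle\rho',\tau',\Lambda\rangle$.

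La métrique duale par rapport à $\Lambda^2\sigma$ se calcule directement, la forme symplectique échangeant $dt$ et $d\tau$, ce qui donne $g_{1,Y}^{\Lambda^2\sigma}=\Lambda^2\bigl(\langle\rho,\tau,\Lambda\rangle^2|dt|^2+f_\mu(t)^2|d\tau|^2\bigr)$. Pour la tempérance, je reprends les estimées lipschitziennes :
\[
\frac{f_\mu(t)}{f_\mu(t')}\leq 1+\frac{|t-t'|}{f_\mu(t')}\leq 1+\Lambda^{-1}\mu\,\bigl(g_{1,Y'}^{\Lambda^2\sigma}(Y-Y')\bigr)^{1/2},
\]
puisque $|t-t'|\leq \Lambda^{-1}\langle\rho',\tau',\Lambda\rangle^{-1}\bigl(g_{1,Y'}^{\Lambda^2\sigma}(Y-Y')\bigr)^{1/2}$ et $f_\mu(t')\geq\mu^{-1}$. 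L'hypothèse $\mu\leq\Lambda$ est précisément ce qui garantit $\Lambda^{-1}\mu\leq 1$, donnant bien une majoration polynomiale en la distance $\Lambda^2\sigma$-duale. L'argument analogue pour le poids $\langle\rho,\tau,\Lambda\rangle$ est immédiat.

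Pour le calcul de $h_1$, les matrices de $g_1$ et $g_1^{\Lambda^2\sigma}$ étant codiagonales, le rapport des valeurs propres est le même dans chaque direction et vaut $\Lambda^{-2}\langle\rho,\tau,\Lambda\rangle^{-2}f_\mu(t)^{-2}$. J'obtiens donc $h_1(Y)=\Lambda^{-1}\langle\rho,\tau,\Lambda\rangle^{-1}f_\mu(t)^{-1}=\Lambda^{-1}\langle\rho,\tau,\Lambda\rangle^{-1}\mu\langle\mu t\rangle^{-1}$, et la majoration $h_1\leq \mu\Lambda^{-2}$ découle de $\langle\rho,\tau,\Lambda\rangle\geq\Lambda$ et $\langle\mu t\rangle\geq 1$. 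L'obstacle principal sera la vérification soigneuse de la tempérance dans le régime $t\to 0$ avec $\mu$ grand : c'est exactement là que la contrainte $\mu\leq\Lambda$ intervient de manière essentielle, la borne $\mu^{-1}\leq f_\mu$ n'étant compatible avec une croissance polynomiale qu'à ce prix.
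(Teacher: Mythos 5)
Votre preuve est correcte et suit essentiellement la m\^eme d\'emarche que le texte : la lenteur et la temp\'erance r\'esultent des estimations (\ref{eq:2.23})--(\ref{eq:2.25}), \`a savoir la comparabilit\'e locale des poids puis $f_\mu(t)/f_\mu(t_1)\leq C(1+\mu|t-t_1|)$ combin\'ee \`a la contrainte $\mu\leq\Lambda$, en prenant comme vous le faites \`a juste titre la forme $f_\mu=\mu^{-1}\langle\mu t\rangle$ effectivement utilis\'ee dans la preuve du texte. Votre calcul explicite de la m\'etrique duale et de la fonction $h_1$ ne fait que d\'etailler ce que le texte affirme dans (\ref{eq:2.22}).
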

\begin{proof}[Preuve de la proposition \ref{prop:2.3}]
La métrique $g_{1,Y}$ est lente car $f_\mu(t)^{-1}$ et que la fonction $x\in\R\to\langle x\rangle^s$ est lente pour la métrique $\langle x\rangle^{-2}|dx|^2$ pour tout $s\in\R$. En effet 
\begin{enumerate}[i)]
\item 
\beeq\label{eq:2.23}
C^{-1}\leq \frac{f_\mu(t)}{f_\mu(t_1)}=\frac{\langle t\mu\rangle}{\langle t_1\mu\rangle}\leq C\text{ si }|t_1-t|f_\mu(t)^{-1}\leq \varepsilon
.\eneq
\item 
\beeq\label{eq:2.24}
\frac{f_\mu(t)}{f_\mu(t_1)}\leq C(1+\mu|t-t_1|)\text{ pour tout }t \text{ et }t_1.
\eneq
\end{enumerate}
Comme 
\beeq\label{eq:2.25}
\frac{g_{1,Y_1}}{g_{1,Y}}\leq C(1+\mu|t-t_1|+|\tau-\tau_1|)^2\leq C\Bigl(1+g_{1,Y}^{\Lambda^2\sigma}\Bigr(t-t_1,\tau-\tau_1)) \text{ si }\mu\leq \Lambda.
\eneq
donc $g_1$ et $\Lambda^2\sigma$ tempérée si $1\leq \mu\leq \Lambda$.

Pour les mêmes raisons, la métrique $g_{0,Y}=M^{-2}|du|^2+\langle \rho,\tau,\Lambda\rangle^{-2}|d\rho|^2$  est lente et $\Lambda^2\sigma$ tempérée. On résume :
\end{proof}
\begin{prop}\label{prop:2.4} 
\begin{enumerate}[i)]
\item La métrique $G_Y$ est admissible pour le calcul de Weyl (voir [3]), avec une fonction $H(Y)=\max(\Lambda^{-2}\mu,\Lambda^{-1})=\Lambda^{-1}$, car $1\leq \mu\leq \Lambda$.
\item Si $|t|\leq 1$, $S(m,\Gamma)\subseteq S(m,G)$.
\item 
\beeq\label{eq:2.26} 
\Bigl|\Bigl(\frac{d}{dt}\Bigr)^jf_\mu(t)\Bigr|\leq Cf_\mu^{1-j},\text{ soit }f_\mu\in S(f_\mu,G).
\eneq
\end{enumerate}
\end{prop}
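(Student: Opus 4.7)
The plan is to verify the three assertions in turn; each is essentially a bookkeeping exercise built on Proposition~\ref{prop:2.3} and the direct-sum structure of $G$.

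For (i), I would decompose $G$ as the orthogonal direct sum $g_0 \oplus g_1 \oplus |dX|^2$, where $g_0 = M^{-2}|du|^2 + \langle\rho,\tau,\Lambda\rangle^{-2}|d\rho|^2$ and $g_1$ is the metric of Proposition~\ref{prop:2.3}. The form $\Lambda^2\sigma$ is block-diagonal with respect to this splitting (the blocks $(u,\rho)$, $(t,\tau)$ and $X$ are pairwise symplectically orthogonal), so slowness and $\Lambda^2\sigma$-temperance transfer from the factors to the sum. These hold for $g_1$ by Proposition~\ref{prop:2.3}, for $g_0$ by the identical argument (with $M$ playing the role of $f_\mu$ and using slowness/temperance of $\langle\cdot\rangle^s$), and trivially for $|dX|^2$. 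The uncertainty function is then read off the lower bound established just before the statement: $G_Y^{\Lambda^2\sigma} \ge \min(\Lambda^2\mu^{-1},\Lambda)^2 G_Y$ yields $H(Y) \le \max(\mu\Lambda^{-2},\Lambda^{-1})$, which collapses to $\Lambda^{-1}$ under the standing hypothesis $\mu \le \Lambda$.

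For (ii), a symbol $m \in S(m,\Gamma)$ depends only on $(t,X)$, so its derivatives in $(u,\rho,\tau)$ vanish and the $G$-estimate reduces to $|\partial_t^j \partial_X^\alpha m(t,X)| \le C_{j,\alpha}\, m(t,X)\, f_\mu(t)^{-j}$. The $\Gamma$-bound $|\partial_t^j \partial_X^\alpha m| \le C_{j,\alpha}\, m$ combined with $f_\mu(t) \le 2$ on $|t|\le 1$ (Definition~\ref{defi:1}), i.e.\ $f_\mu^{-j} \ge 2^{-j}$, then yields the required inequality with the constants merely inflated by a factor $2^j$.

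For (iii), I would use the form $f_\mu(t) = \mu^{-1}\langle \mu t\rangle$ from (\ref{eq:2.18}) together with the classical Japanese-bracket estimate $|(d/dx)^j \langle x\rangle| \le C_j \langle x\rangle^{1-j}$. The chain rule immediately gives $|(d/dt)^j f_\mu(t)| \le C_j \mu^{j-1}\langle \mu t\rangle^{1-j} = C_j f_\mu(t)^{1-j}$, which is the asserted pointwise bound. Since $f_\mu$ depends only on $t$, this is precisely the condition $f_\mu \in S(f_\mu,g_1) \subseteq S(f_\mu,G)$, via item (ii). No step presents a serious obstacle; the one place requiring any care is the uncertainty bound in (i), where one must verify that block-diagonality of $\Lambda^2\sigma$ allows admissibility to pass through the direct sum, and then correctly exploit $\mu \le \Lambda$ to reach the stated value of $H(Y)$.
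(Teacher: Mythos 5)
Your proof is correct and follows essentially the route the paper takes implicitly: Proposition~\ref{prop:2.4} is introduced there only as a summary (``On r\'esume'') of Proposition~\ref{prop:2.3}, of the lower bound $G_Y^{\Lambda^2\sigma}\geq \min(\Lambda^2\mu^{-1},\Lambda)^2G_Y$, and of the remark $|dt|\leq 2f_\mu^{-1}|dt|$ for $|t|\leq 1$, all of which you reassemble in the natural way. The only point worth flagging is that you (correctly) work with $f_\mu=\mu^{-1}\langle t\mu\rangle$ as in (\ref{eq:2.18}) and (\ref{eq:2.23}), rather than with the non-smooth variant $(|t|+\mu^{-2})^{1/2}$ of D\'efinition~\ref{defi:1}; the former is the choice for which the derivative bounds (\ref{eq:2.26}) actually hold.
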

\begin{prop}\label{prop:2.5}
Soient $n_0\in\R$ et $s\in\R$ assez grands alors :
\beeq\label{eq:2.27}
\theta_0(t)=\exp\Bigl(-s\int_0^t f_\mu^{-1}(\sigma)d\sigma\Bigr)
\eneq
\beeq\label{eq:2.28}
n=n_1\exp\Bigl(-s\int_0^t f_\mu^{-1}(\sigma)d\sigma\Bigr)\text{ avec } n_1=n_0f_\mu(t)\Lambda.
\eneq
\begin{enumerate}[i)]
\item
\beeq\label{eq:2.29}
\left|\Bigl(\frac{d}{dt}\Bigr)^j \theta_0(t)\right|\leq \theta_0(t)Cf_\mu^{-j}
\eneq
\item
\beeq\label{eq:2.30}
\Bigl(\frac{d}{dt}\Bigr)\theta_0(t)=-sf_\mu^{-1}(t)\theta_0(t) \text{ et } \Bigl(\frac{d}{dt}\Bigr)^2\theta_0(t)=(s^2f_\mu^{-2}(t)+2st\mu^3\langle t\mu\rangle^{-3})\theta_0(t)>0.
\eneq
\item 
\beeq\label{eq:2.31}
-\partial_tn=\Lambda n_0(s-\mathcal{O}(1)))\geq cs\Lambda n_0 \text{ si } |s|\geq 2.
\eneq
Donc
\begin{multline}\label{eq:2.32}
c=-\theta_0+2\rho\Lambda^{-1}n- \Lambda^{-1}\partial_t n \geq \theta_0(c|s||n_0|-2\rho\mathcal{O}(|n_0|)f_\mu-1)\geq c\theta_0|n_0||s|\\
\text{si } sn_0\geq 2 \text{ et } \rho f_\mu\leq 1.
\end{multline}
\item 
\beeq\label{eq:2.33}
\partial_t^j \theta_0\simeq (-s)^j f_{\mu}^{-j}(t).
\eneq
\end{enumerate}
\end{prop}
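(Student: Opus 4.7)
The plan is to obtain all four assertions by direct differentiation and a simple induction on the order of the derivative, since $\theta_0(t)=\exp\bigl(-s\int_0^t f_\mu^{-1}(\sigma)\,d\sigma\bigr)$ is a scalar exponential whose exponent is an explicit antiderivative of $f_\mu^{-1}$. Throughout I would use the smooth expression $f_\mu=\mu^{-1}\langle t\mu\rangle$ from (\ref{eq:2.18}), so that $f_\mu^{-1}$ is $C^\infty$ in $t$ and every derivative reduces to an elementary manipulation of $\langle t\mu\rangle$.

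Items (i) and (iv) I would handle together by induction on $j$. The chain rule gives $\theta_0'=-sf_\mu^{-1}\theta_0$; each successive derivative either generates a new factor $-sf_\mu^{-1}$, when it lands on the exponent, or hits an already present factor $f_\mu^{-\ell}$ and produces an $\mathcal{O}(f_\mu^{-\ell-1})$ contribution by Proposition \ref{prop:2.4} iii). The dominant term, where every derivative falls on the exponent, equals $(-s)^j f_\mu^{-j}\theta_0$, while every other term carries strictly fewer powers of $s$. This simultaneously yields the bound (i) and the leading-order asymptotic (iv).

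For (ii) I would specialise this to $j=2$. A second application of the product rule gives
\[
\theta_0''=\Bigl(s^2 f_\mu^{-2}-s\,\tfrac{d}{dt}f_\mu^{-1}\Bigr)\theta_0,
\]
and a direct computation yields $\tfrac{d}{dt}f_\mu^{-1}(t)=-t\mu^3\langle t\mu\rangle^{-3}$, from which the claimed identity follows. Positivity of $\theta_0''$ is then immediate since the leading term $s^2 f_\mu^{-2}\theta_0$ is non-negative and, for $s$ of the correct sign and large enough, dominates the lower order correction.

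For (iii) I would write $n=n_0\Lambda f_\mu\theta_0$ and apply the product rule together with $f_\mu\theta_0'=-s\theta_0$, which gives $-\partial_t n=n_0\Lambda(s-f_\mu'(t))\theta_0$. Since $f_\mu'(t)=t\mu/\langle t\mu\rangle$ is bounded by $1$, the bracket equals $s-\mathcal{O}(1)$, giving the first estimate of (iii) for $|s|\geq 2$. To conclude the lower bound on $c$, I would substitute into $c=-\theta_0+2\rho\Lambda^{-1}n-\Lambda^{-1}\partial_t n$, observe that $\Lambda^{-1}n=n_0 f_\mu\theta_0$ so the middle contribution is $2\rho n_0 f_\mu\theta_0$, factor out $\theta_0$, and use the hypotheses $\rho f_\mu\leq 1$ and $sn_0\geq 2$ to absorb the $-\theta_0$ and the $\rho$-term into a constant multiple of $\theta_0|n_0||s|$. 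The only place that genuinely requires care is the sign bookkeeping: the same choice of sign and magnitude of $s$ must simultaneously render $\theta_0''>0$ and $c>0$, and this compatibility is the main obstacle.
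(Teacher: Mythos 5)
Your direct computation is correct and coincides with what the paper leaves implicit: Proposition \ref{prop:2.5} is given without any written proof, being exactly the routine induction on $j$ from $\theta_0'=-sf_\mu^{-1}\theta_0$ together with $f_\mu\in S(f_\mu,G)$ that you carry out, and your decision to work with the smooth expression $f_\mu=\mu^{-1}\langle t\mu\rangle$ of (\ref{eq:2.18}) rather than with D\'efinition \ref{defi:1} is the right one, since the formula (\ref{eq:2.30}) manifestly presupposes it. One small remark: your calculation yields $\theta_0''=(s^2f_\mu^{-2}+st\mu^3\langle t\mu\rangle^{-3})\theta_0$ \emph{without} the factor $2$ printed in (\ref{eq:2.30}) --- that factor appears to be a typo in the paper --- and since $|t\mu^3\langle t\mu\rangle^{-3}|\leq\mu^2\langle t\mu\rangle^{-2}=f_\mu^{-2}$ the positivity of $\theta_0''$ holds for $|s|\geq 2$ irrespective of the sign of $s$, so the sign-compatibility issue you flag at the end (between $\theta_0''>0$ and $c>0$, the latter requiring $sn_0\geq 2$) is not actually an obstacle.
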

Soit $\theta\in S(m_0,G)$ où $m_0$ est une fonction de poids $G$ admissible, on exprime les relations (\ref{eq:2.11}) :
\beeq\label{eq:2.34}
\Lambda^{-1}\partial_t n\in S(\theta_0,G),
\eneq
\beeq\label{eq:2.35}
-\Lambda^{-1}b_1 n\in S(f_\mu\theta_0,G),
\eneq
\beeq\label{eq:2.36}
\Im n\#\sigma_1 =\frac{1}2 \Lambda^{-1}\{n,\sigma_1\}+r_0, r_0\in S(\theta_0f_\mu \Lambda^{-2},G),
\eneq
\beeq\label{eq:2.37}
\theta\rho^2\Lambda^{-2}\in S(\theta_0\rho^2\Lambda^{-2},G)
\eneq
\beeq\label{eq:2.38}
\frac{1}4 \Lambda^{-2}\partial_t^2\theta \in S(\theta_0f_\mu^{-2}\Lambda^{-2},G)
\eneq
\beeq\label{eq:2.39}
\Re (\theta\#\sigma_1)=\theta\sigma_1+r_1, r_1\in S(\theta_0\Lambda^{-2},G),
\eneq
\beeq\label{eq:2.40}
\frac{1}{2} \Lambda^{-2}\Im n\#\partial_t b_1 \in S(m_0\Lambda^{-2},G),
\eneq
\beeq\label{eq:2.41}
-\rho\Lambda^{-2}\partial_t\theta \in S(\theta_0\rho \Lambda^{-2} f_\mu^{-1},G),
\eneq
\beeq\label{eq:2.42}
-\Lambda^{-1}\Im (\theta\#b_1)\in S(\theta_0\Lambda^{-2} f_\mu^{-1},G),
\eneq
\beeq\label{eq:2.43}
-\frac{1}{2}\Lambda^{-1}\Re n\#\partial_t\sigma_1 =-\frac{1}{2}\Lambda^{-1}n\partial_t\sigma_1+r_2, r_2\in S(\theta_0\Lambda^{-2}f_\mu,G).
\eneq
On prendra si $\theta(t,X)=\theta_0(t)\theta'(X),\theta'(X)\in S(1,\Gamma)$.

Le symbole $\Re \overline m\# q$ est un polynôme en $\tau$ de degré $2$, que l'on écrit :
\beeq\label{eq:2.44}
\Re \overline m\# q=c\tau^2+2 e\tau +f.
\eneq
\begin{prop}\label{prop:2.6}
\beeq\label{eq:2.45}
c\in S(\theta_0(1+\rho f_\mu),G), e\in S(\theta_0 f_\mu,G) \text{ et } f\in S(\theta_0,G)
\eneq
ce qui s'écrit 
\beeq\label{eq:2.46}
c(\tau-\lambda)^2+f-\frac{e^2}{4c}, \text{ avec }\lambda=\frac{e}{2c}\in S(\rho,G), \delta=f-\frac{e^2}{4c}\in S(\theta_0,G).
\eneq
\end{prop}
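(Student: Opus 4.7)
The plan is to derive each weight class by a term-by-term inspection of (\ref{eq:2.12})--(\ref{eq:2.14}), using the estimates (\ref{eq:2.34})--(\ref{eq:2.43}) and the explicit form of the multipliers fixed in Proposition \ref{prop:2.5}, and then to perform a formal completion of the square in the Weyl calculus for the metric $G$. For the building blocks, with $\theta(t,X) = \theta_0(t)\theta'(X)$, $\theta' \in S(1,\Gamma) \subseteq S(1,G)$ by Proposition \ref{prop:2.4}(ii), Proposition \ref{prop:2.5}(i) gives $\theta \in S(\theta_0,G)$ with $\partial_t^j\theta \in S(\theta_0 f_\mu^{-j},G)$, while (\ref{eq:2.28}) and (\ref{eq:2.31}) combined with Proposition \ref{prop:2.4}(iii) give $\Lambda^{-1}n \in S(\theta_0 f_\mu, G)$ and $\Lambda^{-1}\partial_t n \in S(\theta_0, G)$.

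With these in hand the classes of $c$, $e$, $f$ follow by inspection. For $c$, the three summands in (\ref{eq:2.12}) lie respectively in $S(\theta_0,G)$, $S(\rho\theta_0 f_\mu,G)$, and $S(\theta_0,G)$, so their sum is in $S(\theta_0(1+\rho f_\mu),G)$. For $e$, the size of $b_1$ coming from hypothesis (\ref{eq:1.3}) and the decomposition in (\ref{2.5}), combined with (\ref{eq:2.35}) and (\ref{eq:2.36}), puts both summands in $S(\theta_0 f_\mu,G)$. For $f$, the seven summands in (\ref{eq:2.14}) are respectively controlled by (\ref{eq:2.37})--(\ref{eq:2.43}), each dominated by $\theta_0$. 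The algebraic rewriting $c\tau^2 + 2e\tau + f = c(\tau - \lambda)^2 + \delta$ with $\lambda = e/(2c)$, $\delta = f - e^2/(4c)$ then requires $1/c$ to be admissible. The lower bound (\ref{eq:2.32}) provides $c \geq c'\theta_0$ in the regime $\rho f_\mu \leq 1$, giving $1/c \in S(\theta_0^{-1},G)$; on $\rho f_\mu \geq 1$ the term $2\rho\Lambda^{-1}n$ dominates, producing $1/c \in S(\theta_0^{-1}(\rho f_\mu)^{-1},G)$. Tracking the quotients of weights then yields the stated classes for $\lambda$ and $\delta$.

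The main obstacle I expect is precisely this last step: the naive bound on $e^2/(4c)$ is of size $\theta_0 f_\mu^2$, so obtaining $\delta \in S(\theta_0,G)$ forces a genuine cancellation between $e^2/(4c)$ and explicit terms in $f$, most notably $\Re\theta\#\sigma_1$, which carries the factor $\theta_0 a$. The cancellation hinges on the refined structure in (\ref{2.5}), where $b_1^0$ is bounded by $\mu + a^{1/2}$ by hypothesis (\ref{eq:1.3}): the leading piece of $e$ then carries only a factor $a^{1/2}$, so the algebraic identity $e^2 \sim \theta_0^2 a$ at leading order is consistent with $cf \sim \theta_0^2 a$, and the leading parts cancel. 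Executing this cancellation cleanly within the symbol calculus of $G$, with proper control of the remainders generated by the Moyal product, is the technical heart of the argument.
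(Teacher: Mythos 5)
Your derivation of the symbol classes of $c$, $e$ and $f$ by term-by-term inspection of (\ref{eq:2.12})--(\ref{eq:2.14}) via (\ref{eq:2.34})--(\ref{eq:2.43}), together with the lower bound (\ref{eq:2.47}) on $c$ to make $1/c$ admissible before completing the square, is exactly the paper's route, and that part is fine. Two corrections on your closing discussion, however. First, the ``main obstacle'' you announce is not an obstacle for the statement actually being proved. By D\'efinition \ref{defi:1} (equation (\ref{2.19})) one has $\mu^{-1}\le f_\mu\le 2$, so $S(\theta_0 f_\mu^2,G)\subseteq S(\theta_0,G)$ and the crude bound $e^2/(4c)\in S(\theta_0 f_\mu^2,G)$ already yields $\delta\in S(\theta_0,G)$ with no cancellation whatsoever. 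The absorption you describe is what is needed for the \emph{positivity} of $\delta$, that is for Proposition \ref{prop:2.7}, which the paper establishes separately in (\ref{eq:2.49})--(\ref{eq:2.58}). Second, your account of that absorption is too optimistic. Hypothesis (\ref{eq:1.3}) bounds $b_1^0$ by $|\partial_t a|/a^{1/2}+a^{1/2}$, and the first term is not dominated by $a^{1/2}$. The paper controls it through the structural inequality (\ref{eq:2.52}), namely $|\partial_t a|/a^{1/2}\le C t^{-1}a^{1/2}$ (a consequence of (\ref{eq:1.2})), through the dominant term $f_0=-\tfrac12\Lambda^{-1}n\,\partial_t\sigma_1$ of $f$ rather than only $\Re\theta\#\sigma_1$, and through the splitting into the regions $|t|\ge\mu^{-1}$ and $|t|\le\mu^{-1}$ with the choice $\mu\ge\Lambda^{1/(k+1)}$. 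Saying that the leading piece of $e$ carries only a factor $a^{1/2}$ skips precisely the part of the argument where the hypotheses on $a$ enter.
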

avec
\beeq\label{eq:2.47}
c=\theta_0(-1+2n_0\rho f_\mu + n_0 s)\geq cn_0(s+s\rho f_\mu)\theta_0\text{ dans } s\text{ grand et }n_0s\text{ grand}. 
\eneq
\beeq\label{eq:2.48}
e=\Lambda^{-1}n\theta b_1^0-\frac{1}{2}f_\mu n_0\theta_0\{\theta',\sigma_1\}+r_0,r_0\in S(\theta_0\Lambda^{-2}f_\mu,G).
\eneq
On majore $\frac{|e|^2}{4c}$ à l'aide des équations \ref{eq:2.48} et \ref{eq:1.3} soit :
\beeq\label{eq:2.49}
|e|\leq Cn_0f_\mu\theta_0\Bigl(\frac{|\partial_t a|}{a^{1/2}}+a^{1/2}+\Lambda^{-1}\Bigr).
\eneq
Donc 
\beeq\label{eq:2.50}
\frac{|e|^2}{4c}\leq Cn_0 \theta_0f_\mu^2(s+\rho f_\mu)^{-1}\left(\Bigl(\frac{|\partial_t a|}{a^{1/2}}\Bigr)^2+\Lambda^{-2}\right)
\eneq
Le terme dominant de $f$ est 
\beeq\label{eq:2.51}
f_0=-\frac{1}{2}\Lambda^{-1}n\partial_t\sigma_1=-\frac{1}{2}n_0f_\mu(\partial_t a+\mathcal O(\Lambda^{-1})).
\eneq
Nos hypothèses font que 
\beeq\label{eq:2.52}
\frac{|\partial_ta|}{a^{1/2}}\leq C t^{-1}a^{1/2}.
\eneq
\begin{enumerate}[i)]
\item Dans $I=\{t ; |t|\geq \mu^{-1}\}$ on majore $f_0$ par 
\beeq\label{eq:2.53}
|f_0|\leq Cn_0(a+n_0 t f_\mu\Lambda^{-1}).
\eneq
et
\beeq\label{eq:2.54}
\frac{|e|^2}{4c}\leq Cn_0s^{-1}(a+\Lambda^{-2}).
\eneq
\beeq\label{eq:2.55}
\delta\geq \theta_0 (a(1-Cn_0)+\Lambda^{-1}(a_1-Cn_0)-C\Lambda^{-2}).
\eneq
\item Dans $J=\{t;|t|\leq \mu^{-1}\}$ on majore $f_0$ par
\beeq\label{eq:2.56}
|f_0|\leq Cn_0(f_\mu(|\partial_t a|+\Lambda^{-1}))\leq Cn_0f_\mu (t^{2k-1}|\alpha|+\beta +\Lambda^{-1})\leq Cn_0(\mu^{-2k}\alpha+a+\Lambda^{-1}).
\eneq
Ce qui s'absorbe dans $\mu^2\Lambda^{-2}$ si $\mu\geq \Lambda^{\frac{1}{k+1}}$. Si $\mu^{-1}\leq \Lambda^{-1}$ on a donc :
\end{enumerate}
tandis que
\beeq\label{eq:2.57}
\frac{|e|^2}{4c}\leq Cn_0 s^{-1}\mu^{-2}(t^{k-1}\alpha^{1/2}+\beta^{1/2}+a^{1/2})^2 +\Lambda^{-1}\leq Cn_0(\mu^{-2k}\alpha+a+\Lambda^{-1}).
\eneq
Donc si $\mu\geq \Lambda^{\frac{1}{k+1}}$
\beeq\label{eq:2.58}
\delta\geq \theta_0(a+\Lambda^{-1}a_1-Cn_0\Lambda^{-1}-C\Lambda^{-2}).
\eneq
Nos hypothèses entraînent quand $\alpha(0,X_0)=0$ ou $k\geq 2$, que l'on peut appliquer l'inégalité de Melin voir [3] à $\beta+\Lambda^{-1}\alpha'_1$.
\begin{prop}\label{prop:2.7} Sous les hypothèses (\ref{hypo:1}) si $s$, et $\Lambda^{-1}\mu^2$, et $\Lambda\mu^{-1}$ sont grands
\beeq\label{eq:2.59}
\delta^{w_\Lambda}\geq c\Lambda^{-1}.
\eneq
\end{prop}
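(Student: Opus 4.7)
Le principe est de convertir les estimations ponctuelles (\ref{eq:2.55}) et (\ref{eq:2.58}) sur $\delta$ en une minoration d'op�rateur via le calcul de Weyl associ� � la m�trique admissible $G$ construite dans la proposition \ref{prop:2.4}. Dans les deux r�gions $I=\{|t|\geq\mu^{-1}\}$ et $J=\{|t|\leq\mu^{-1}\}$ on dispose, modulo des erreurs contr�l�es, d'une minoration du type
\[
\delta \geq \theta_0\bigl(a + \Lambda^{-1}a_1' - Cn_0\Lambda^{-1}-C\Lambda^{-2}\bigr),
\]
la partie imaginaire $a_1''$ ayant d�j� �t� absorb�e dans l'estimation de $|e|^2/(4c)$ via (\ref{eq:1.3}).

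Je commencerais par factoriser le poids scalaire $\theta_0(t)$. Puisque $\theta_0^{1/2}\in S(\theta_0^{1/2},g_1)$ avec seulement une d�pendance en $t$, le calcul symbolique donne $\theta_0^{w_\Lambda}=(\theta_0^{1/2,w_\Lambda})^2$ modulo un reste dans $S(\theta_0\Lambda^{-2},G)$, ce qui est tol�rable au niveau vis�. Il suffit alors d'�tablir la minoration d'op�rateur pour $(a+\Lambda^{-1}a_1')^{w_\Lambda}$, les termes additionnels $-Cn_0\Lambda^{-1}-C\Lambda^{-2}$ se traduisant par des constantes multipliant l'identit� qui sont strictement domin�es par le gain $c\Lambda^{-1}$ d�s que $n_0$ est petit devant la constante de Melin.

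Pour le coeur de la preuve, je distinguerais les deux situations de l'hypoth�se \ref{hypo:1}. Dans le cas non effectivement hyperbolique (iii), j'�cris $a=t^{2k}\alpha+\beta$ et je traite s�par�ment les deux morceaux: le terme $t^{2k}\alpha\geq 0$ est justiciable de l'in�galit� de Fefferman--Phong (ou du G�rding aigu) dans le calcul de Weyl associ� � $G$, qui fournit une minoration $\geq -C\Lambda^{-2}$; pour le terme principal $\beta+\Lambda^{-1}a_1'$ j'invoque l'in�galit� de Melin, dont les hypoth�ses sont exactement v�rifi�es: $\Sigma_t=\beta_t^{-1}(0)$ est une sous-vari�t� symplectique de rang constant sur laquelle $\beta_t$ s'annule � l'ordre $2$ par iii)(a), et la positivit� stricte $a_1'+\tfrac{1}{2}\mathrm{tr}^+ F_a>0$ sur $\Sigma$ requise par Melin est exactement (\ref{eq:1.4}). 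On obtient ainsi $(\beta+\Lambda^{-1}a_1')^{w_\Lambda}\geq c\Lambda^{-1}$ modulo $\Lambda^{-2}$. Dans le cas effectivement hyperbolique (iv), $\alpha(0,X_0)>0$ et $k=1$, donc $a$ est elliptique au voisinage de $(0,X_0)$ dans la direction transverse � $t=0$ et une simple application de G�rding aigu suffit; l'hypoth�se iii) n'intervient plus.

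La principale difficult� technique � surveiller sera la compatibilit� entre les diff�rents calculs en jeu: l'in�galit� de Melin telle qu'�nonc�e dans H�rmander [3] se pr�sente dans le calcul semi-classique standard, alors qu'ici on travaille avec la m�trique plus �labor�e $G$ et avec une variable temporelle suppl�mentaire coupl�e � la r�gion $I\cup J$ via la fonction $f_\mu$. Il faudra v�rifier que les param�tres choisis, $\Lambda^{-1}\mu^2$ et $\Lambda\mu^{-1}$ grands, assurent bien que la fonction de Planck $H=\Lambda^{-1}$ de $G$ domine les pertes dues � $f_\mu$ dans la r�gion $J$ (o� $\mu^{-2k}\alpha\leq \Lambda^{-2}\alpha$ pr�cis�ment quand $\mu\geq\Lambda^{1/(k+1)}$, condition d�j� utilis�e en (\ref{eq:2.56})), et que les restes produits par la composition de Melin avec la conjugaison par $\theta_0^{1/2}$ se rangent uniform�ment dans $S(\theta_0\Lambda^{-2},G)$ donc sont absorbables. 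Une fois cette v�rification faite, on recolle les estimations des r�gions $I$ et $J$ par une partition de l'unit� adapt�e � la d�composition $|t|\lessgtr\mu^{-1}$, et on conclut la minoration annonc�e.
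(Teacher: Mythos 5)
Your treatment of the non-effectively hyperbolic case follows the paper's route: starting from the pointwise bound (\ref{eq:2.58}), the operator positivity at order $\Lambda^{-1}$ is obtained by applying Melin's inequality to $\beta+\Lambda^{-1}a_1'$, hypotheses iii)a) and (\ref{eq:1.4}) being precisely the hypotheses of Melin's theorem, while the nonnegative piece $t^{2k}\alpha$ and the errors $-Cn_0\Lambda^{-1}-C\Lambda^{-2}$ are absorbed for $n_0$ small. Your remarks on the compatibility of Melin's inequality with the metric $G$ and on the role of the conditions $\Lambda^{1/(k+1)}\leq\mu\leq\Lambda$ are the right technical points to verify, and this part of your sketch is consistent with the paper.

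The gap is in case iv). You claim that for $\alpha(0,X_0)>0$, $k=1$, the symbol $a$ is elliptic transversally to $t=0$ and that sharp G\aa rding suffices. This cannot work: at the double characteristic point one has $\beta(0,X_0)=0$, hence $a(0,X_0)=0$, so $a$ is a genuinely vanishing nonnegative symbol; sharp G\aa rding gives $a^{w_\Lambda}\geq-C\Lambda^{-1}$ and Fefferman--Phong gives $\geq-C\Lambda^{-2}$, i.e.\ lower bounds of the \emph{wrong sign}. Neither produces the strictly positive gain $+c\Lambda^{-1}$, and in case iv) the Melin-type positivity (\ref{eq:1.4}) is precisely not assumed. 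In the paper the positivity in the effectively hyperbolic case comes from a different mechanism (Remarque \ref{rema:2}): one takes $\mu=\mu_0\Lambda^{1/2}$ and adjusts the sign of $n_0$ so that the drift term $f_0=-\tfrac12\Lambda^{-1}n\,\partial_t\sigma_1\simeq\tfrac12|n_0|f_\mu\,\partial_t a$ contributes $\alpha|n_0|t\,\theta_0$ --- this is where effective hyperbolicity, $\partial_t a\sim 2t\alpha$ with $\alpha>0$, actually enters --- which dominates in $\{|t|\geq\mu^{-1}\}$ via (\ref{eq:2.76}), while in $\{|t|\leq\mu^{-1}\}$ the convexity term $\tfrac14\Lambda^{-2}\partial_t^2\theta_0\simeq s^2f_\mu^{-2}\Lambda^{-2}\theta_0\simeq s^2\mu_0^2\Lambda^{-1}\theta_0$ supplies the $c\Lambda^{-1}$, the quotient $|e|^2/4c$ being absorbed through (\ref{eq:2.75}) and (\ref{eq:2.77}). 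Without this (or an equivalent source of strict positivity) your sketch does not establish the proposition in the effectively hyperbolic case.
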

On minore $(c(\tau-\lambda))^2$ par calcul symbolique et
\beeq\label{eq:2.60}
\Bigl(c(t)(\tau-\lambda)^{w_\Lambda}\Bigr)=(\tau-\lambda\#)c(t)\#(\tau-\lambda)-\frac{1}{4}c''(t)\Lambda^{-2}+S(\Lambda^{-2},G).
\eneq
Comme $c(t)=-\theta+2n_0\rho f_\mu-n_0 f_\mu\partial_t\theta_0$,
$$c''(t)=-\partial_t^2\theta_0+\mathcal O(n_0)s^3 f_\mu^{-2}.$$
Comme $c(t)=\Lambda^{\frac{1}{k+1}}$, $f_\mu^{-2}\Lambda^{-2}=o(1)\Lambda^{-1}$ si $k\geq 2$.

Par ailleurs, les estimations se microlocalisent effectivement auprès de chaque point $(0,X_v)$, par une partition de l'unité sous la forme :
\beeq\label{eq:2.61}
\sum_v \chi_v^2(t,X)=1,\chi_v\in S(1,G),
\eneq
seuls vont perturber l'inégalité les bi-commutateurs avec les $\chi_v$ qui produisent $\mathcal O(f_\mu^{-2}\Lambda^{-2})$.
\begin{multline}\label{eq:2.62}
\int_t^\sigma \left(\delta+\Bigl(c(\tau-\lambda)^2\Bigr)^{w_\Lambda}u_1,u_1(s)ds\right)e^{-2\rho s}\geq c\Lambda^{-1}\left(\int_t^\sigma \left|\Bigl(\frac{D_t}{\Lambda}-\lambda\Bigr)u_1(s)\right|^2+|u_1|^2(s)e^{-2\rho s}ds\right)\\
-C\Lambda^{-1}\theta_0(t)\Bigl(|u_1|^2(t)(t)+\Bigl|\Bigl(\frac{D_t}{\Lambda}-\lambda\Bigr)u_1(t)\Bigr|^2\Bigr)e^{-2\rho t}\\
- C\Lambda^{-1}\theta_0(\sigma)\Bigl(|u_1|^2(\sigma)e^{-2\rho t}+\Bigl|\Bigl(\frac{D_t}{\Lambda}-\lambda\Bigr)u_1(\sigma)\Bigr|^2\Bigr)e^{-2\rho t}.
\end{multline}
Puis on voit que :
\beeq\label{eq:2.63}
C\left|\Bigl(\frac{D_t}{\Lambda}-\lambda\Bigr)u_1\right|^2\geq \left|\frac{D_t}{\lambda}u_1\right|^2 -C\|\lambda\|^2|u_1|^2.
\eneq
Puis
\beeq\label{eq:2.64}
\|\lambda\|\leq C.
\eneq
On déduit de (\ref{eq:2.63}) et de (\ref{eq:2.62}) que :
\beeq\label{eq:2.65}
\int_t^\sigma \varphi(s)ds\leq C\int_t^\sigma\left(\Bigl(\delta+\Bigl(c(\tau-\lambda)^2\Bigr)^{w_\Lambda}\Bigr)u_1,u_1(s) ds\right)e^{-2\rho s}+C(\varphi(t)+\varepsilon\varphi(\sigma)).
\eneq
où
\beeq\label{eq:2.66}
\varphi(s)=\theta_0(s) e^{-2\rho s}\Lambda^{-1}\left(|u_1(s)|^2+\Bigl|\Bigl(\frac{D_t}{\Lambda}-\lambda\Bigr)u_1\Bigr|^2(s)\right).
\eneq
On multiplie par $e^{-k\sigma}$ et on intègre de $0$ à $\infty$, on obtient quand $\varphi(0)=0$ :
\begin{multline}\label{eq:2.67}
\frac{1}{k}\int_0^\infty \varphi(s)e^{-ks}ds \leq C\int_0^\infty\left(\Bigl(\delta+\Bigl(c(\tau-\lambda)^2\Bigr)^{w_\Lambda}u_1,u_1(s)ds\Bigr)\right)\theta_0 e^{-(2\rho+k)s}ds +\\
C\int_0^\infty \varphi(s)e^{-ks}ds.
\end{multline}
On obtient ainsi :
\begin{multline}\label{eq:2.68}
\left(\Lambda^{-1}\int_0^{\infty}\Bigl(|u_1|^2(s)+\Bigl|\frac{D_t}{\Lambda}u_1\Bigr|(s)^2\Bigr)\theta_0(s)ds\right)\leq C\int_0^\infty |Pu_1|^2(t)\theta_0(t)dt\\
\text{pour } u_1\in C_0^\infty([0,T[\times\R^n).
\end{multline}
On peut ensuite modifier la condition de support en rajoutant à (\ref{eq:2.68}) les termes de bord en $0$ soit 
\begin{multline}\label{eq:2.69}
\Lambda^{-1}\int_0\infty \left(|u_1|^2(s)+\Bigl|\frac{D_t}{\Lambda} u_1\Bigr|(s)^2\right)\theta_0(s)ds\leq\\
C\left(\int_0^\infty |Pu_1|^2(t)\theta_0(t)dt + \Bigl(|u_1(0)|^2+\Bigl|\frac{D_t}{\Lambda}u_1(0)\Bigr|^2\Bigr)\Bigl(\int_0^\infty \theta_0(t)dt\Bigr)\right)\text{ pour }u_1\in C_0^\infty([0,T[\times \R^n).
\end{multline}
Ce qui achève la preuve.
\begin{rema}\label{rema:1} Quand $k=1$ et $\alpha(0,X_0)=0$, on choisit $\mu-\mu_0\Lambda^{1/2}$, il faut alors tout d'abord ajuster le signe de $n_0$ de sorte que
\beeq\label{eq:2.70}
f_0=\frac{1}{2}n_0\partial_t a+\mathcal O(f_\mu\Lambda^{-1})\geq -C|n_0|a+\mathcal O(f_\mu\Lambda^{-1}).
\eneq
Il faut ensuite ajuster tous les paramètres, on avait :
\begin{enumerate}[a)]
\item $|n_0|\leq \varepsilon$ et $n_0\leq 0$ et $s<0$,
\item $|s|^{-1}|n_0|\mu_0^{-2}\alpha\leq \varepsilon$,
\item $|s|^3|n_0|\mu_0^2\leq \varepsilon$,
\item $|s|\geq 1$,
\item $|s||n_0|\geq 1$.
\end{enumerate}
Qui sont satisfaites si $\alpha(0,X_0)\leq \varepsilon^2$ avec $\sigma=s|n_0|^{-1}=1$, $\tau=(n_0^{-1}\mu_0)^2=\varepsilon$, puis $n_0$ petit et $s$ grand.
\end{rema}

\begin{rema}\label{rema:2} On peut aussi traiter le cas effectivement hyperbolique de la façon suivante. On prend comme toujours :
\beeq\label{eq:2.71}
n=-|n_0|\Lambda f_\mu\theta_0
\text{ avec }n_0<0\text{ petit},
\eneq
Et
\beeq\label{eq:2.72}
\mu=\mu_0\Lambda^{1/2}
\eneq
Et on pose :
\begin{multline}\label{eq:2.73}
c=\widetilde c\theta_0=\theta_0(-1+2\rho f_\mu n_0 +sn_0 +\mathcal O(s^{-1})),\\
\text{on pose }v=-1+sn_0(1+\mathcal O(s^{-1})) \text{ et on prend }s<0, |s|\text{ grand}.
\end{multline}
On veut maintenant :
\beeq\label{eq:2.74}
v>0\text{ petit},
\eneq
Le défaut du calcul symbolique (\ref{eq:2.60}) doit être compensé par le terme $\frac{1}4\Lambda^{-2}\partial_t^2\theta_0\simeq s^2 f_\mu^{-2}\Lambda^{-2}$, le terme $\frac{1}{4}\Lambda^{-2}c''(t)=\mathcal O(v)s^2 f_\mu^{-2}/\Lambda^{-2}$, il faut donc $v\leq \varepsilon$, où en fait $\varepsilon$ est une constante universelle.
\begin{enumerate}[i)]
\item La zone $\{t,|t|\geq \mu^{-1}\}$ se traite à l'aide de :
\beeq\label{eq:2.75}
\frac{|e|^2}{4c}\leq C|n_0|^2 t^2 \theta_0 v^{-1}
\eneq
Tandis que :
\beeq\label{eq:2.76}
f_0\geq \theta_0(\alpha|n_0|t+(1-C|n_0|)a+\mathcal O(\Lambda^{-1}))
\eneq
ce qui produit donc les conditions : $\Lambda^{-1}\leq C|t|(|n_0|+t)$, et $|t||n_0|\leq v$ ; qui seront satisfaites si $\mu_0\leq C^{-1}$ et $n_0\leq C^{-1}$. On produit donc ici une borne inférieure adéquate.
\item La zone $\{t,|t|\leq \mu^{-1}\}$

On utilise cette fois :
\beeq\label{eq:2.77}
\frac{|e|^2}{4c}\leq C|n_0|\mu^{-2}\theta_0 v^{-1} =\mathcal O(\mu_0^{-2}\Lambda^{-1}|n_0|v^{-1}\theta_0)
\eneq
Tandis que $s^2 f_\mu^{-2}\Lambda^{-2}$ produit lui $s^2\mu_0^2\Lambda^{-1}$. On prendra donc $|s|$ grand.
\end{enumerate}
\end{rema}
CQFD.

\textsc{Bernard Lascar. Richard Lascar. Université Denis Diderot. Département De Mathématiques.
Institut Mathématiques De Jussieu, UMR7586, 4 Place Jussieu, 75005 Paris. France}

{\it E-mail address :} {\tt richard.lascar@imj-prg.fr}
\end{document}